\numberwithin{equation}{section}
\newtheorem{theorem}{Theorem}[section]
\newtheorem{corollary}{Corollary}[section]
\newcommand{\sqr}[2]{{\vcenter{\vbox{\hrule height#2pt
                \hbox{\vrule width#2pt height#1pt \kern#1pt
                \vrule width#2pt}\hrule height#2pt}}}}
\newcommand{\beq}{\begin{equation}}
\newcommand{\eeq}{\end{equation}}
\newcommand{\beqar}{\begin{eqnarray}}
\newcommand{\eeqar}{\end{eqnarray}}
\def\beqars{\begin{eqnarray*}}
\def\eeqars{\end{eqnarray*}}
\newcommand{\smod}[1]{\hspace{-1mm} \pmod{#1}}
\def \ds{\displaystyle}
\newcommand{\nn}{\mathbb{N}}
\newcommand{\zz}{\mathbb{Z}}
\newcommand{\qq}{\mathbb{Q}}
\newcommand{\cc}{\mathbb{C}}
\newcommand{\hh}{\mathbb{H}}
\begin{document}

\title{Eta quotients, Eisenstein series and Elliptic Curves}  

\author{Ay\c{s}e Alaca, \c{S}aban Alaca, Zafer Selcuk Aygin}

\maketitle

\markboth{AY\c{S}E ALACA, \c{S}ABAN ALACA, ZAFER SELCUK AYGIN}
{ETA QUOTIENTS, EISENSTEIN SERIES AND ELLIPTIC CURVES}

\begin{abstract}

We express all the newforms of  weight $2$ and levels $30$, $33$, $35$, $38$, $40$, $42$, $44$, $45$ as linear combinations of eta quotients and Eisenstein series, and list their corresponding strong Weil curves. 

Let $p$ denote a prime and $E (\zz_p)$ denote the the group  of algebraic points of an elliptic curve $E$ over $\zz_p$. 
We give a  generating function for the  order of $E (\zz_p)$ for certain strong Weil curves in terms of eta quotients and Eisenstein series. 
We then use our generating functions to deduce congruence relations  for the order of $E (\zz_p)$  for those  strong Weil curves.

\vspace{2mm}

\noindent
Key words and phrases: Dedekind eta function; eta quotients; Eisenstein series;  
modular forms; cusp forms; newforms; Fourier series; Fourier coefficients; elliptic curves; strong Weil curves; 
group of algebraic points;  modularity theorem.

\vspace{2mm}

\noindent
2010 Mathematics Subject Classification: 11F11, 11F20, 11F30, 11G07, 11Y35, 14H52
\end{abstract}

%%%%%%%%%%%%%%%%%%%%%%%%  Section 1
\section{Introduction}

Let $\nn$, $\zz$, $\qq$ and $\cc$ denote the sets of positive integers, integers, rational numbers and complex numbers, respectively. 
Let $N\in\nn$ and $k \in \zz$. Let $\Gamma_0(N)$ be the modular subgroup defined by
\beqars
\Gamma_0(N) = \left\{ \left(
\begin{array}{cc}
a & b \\
c & d
\end{array}
\right)  \mid  a,b,c,d\in \zz ,~ ad-bc = 1,~c \equiv 0 \smod {N}
\right\} .
\eeqars 
We write $M_k(\Gamma_0(N))$ to denote the space of modular forms of weight $k$ and level~$N$. 
The Dedekind eta function $\eta (z)$ is the holomorphic function defined on the upper half plane $\hh = \{ z \in \cc \mid \mbox{\rm Im}(z) >0 \}$ 
by the product formula
\beqars
\eta (z) = e^{\pi i z/12} \prod_{n=1}^{\infty} (1-e^{2\pi inz}).
\eeqars
A product of the form
\beqar
f(z) = \prod_{1\leq \delta \mid N} \eta^{r_{\delta}} ( \delta z) ,
\eeqar 
where $r_{\delta} \in \zz$, not all zero, is called an eta quotient.  
We set $q:=q(z)=e^{2 \pi i z}$.  We also set $[n]f(z):=a_n$
for $ \ds f(z)=\sum_{n\in \zz} a_n q^{ n}$. 

Martin and Ono  \cite{ono} listed all the weight $2$ newforms that are eta quotients, and gave their corresponding strong Weil curves. There are such eta quotients only for levels $11$, $14$, $15$, $20$, $24$, $27$, $32$, $36$, $48$, $64$, $80$, $144$. 

In this paper we express all the newforms of weight $2$ and levels $30$, $33$, $35$, $38$, $40$, $42$, $44$, $45$ as linear combinations of eta quotients and Eisenstein series, and give their corresponding strong Weil curves. 

Let $p$ denote a prime and $E (\zz_p)$ denote the the group  of algebraic points of an elliptic curve $E$ over $\zz_p$. 
We give a  generating function for the  order of $E (\zz_p)$ for certain strong Weil curves in terms of eta quotients and Eisenstein series. 
We then use our generating functions to deduce congruence relations  for the order of $E (\zz_p)$  for those  strong Weil curves.

%%%%%%%%%%%%%%%%%%%% Section 2
\section{Preliminary results}

Appealing to \cite[Theorem 1.64, p. 18]{onoweb} and  \cite[Corollary 2.3, p. 37]{Kohler} (see also  \cite{Ligozat, Kilford, alacaaygin}) 
one can show that 
\beqar
 \displaystyle \frac{\eta(z)\eta(3z)\eta^{3}(10z)\eta^{3}(30z)}{\eta(2z)\eta(5z)\eta(6z)\eta(15z)} \in M_2(\Gamma_0(30)), \\
 \displaystyle \frac{\eta^{3}(3z)\eta^{3}(33z)}{\eta(z)\eta(11z)}, \eta^{2}(3z)\eta^{2}(33z), \eta(z)\eta(3z)\eta(11z)\eta(33z) \in  M_2(\Gamma_0(33)), 
 \nonumber \\
 \displaystyle \frac{\eta^3(5z)\eta^3(7z)}{\eta(z)\eta({35z})} , \frac{\eta^3(z)\eta^3({35z})}{\eta(5z)\eta({7z})} \in M_2(\Gamma_0(35)), 
 \nonumber \\
\frac{\eta^4(2z)\eta^4({38z})}{\eta^2(z)\eta^2({19z})}, \frac{\eta^3(2z)\eta^3({19z})}{\eta(z)\eta({38z})} , \frac{\eta^3(z)\eta^3({38z})}{\eta(2z)\eta({19z})},  \frac{\eta^4(z)\eta^4({19z})}{\eta^2(2z)\eta^2({38z})} \in   M_2(\Gamma_0(38)),
 \nonumber \\
  \frac{\eta^{2}(z)\eta^{2}(5z)\eta^{2}(8z)\eta^{2}(40z)}{\eta(2z)\eta(4z)\eta(10z)\eta(20z)} \in M_2(\Gamma_0(40)),
 \nonumber \\
 \frac{\eta^{2}({2z}\eta^{2}({3z}))\eta^{2}({14z})\eta^{2}({21z})}{\eta(z)\eta({6z})\eta({7z})\eta({42z})}, \frac{\eta^{2}(z)\eta^{2}({6z})\eta^{2}({7z})\eta^{2}({42z})}{\eta({2z})\eta({3z})\eta({14z})\eta({21z})} \in M_2(\Gamma_0(42)),
 \nonumber \\
 \frac{\eta^{3}({4z})\eta^{3}({11z})}{\eta(z)\eta({44z})}, \frac{\eta^{3}(z)\eta^{3}({44z})}{\eta({4z})\eta({11z})} \in  M_2(\Gamma_0(44)),
 \nonumber \\
 \frac{\eta({3z})\eta^{2}({5z})\eta^{2}({9z})\eta({15z})}{\eta(z)\eta({45z})}, \frac{\eta^{2}(z)\eta({3z})\eta({15z})\eta^{2}({45z})}{\eta({5z})\eta({9z})}
\in M_2(\Gamma_0(45)),
 \nonumber
\eeqar 

The Eisenstein series $L(z)$ is defined as 
\beqars
L(z):=-\frac{1}{24}+\sum_{n>0} \sigma(n) q^{ n },
\eeqars
where $\ds \sigma(n)=\sum_{m\vert n} m$ is the sum of divisors function. By \cite[Theorem 5.8]{stein} we have
\beqar
L_t(z):=L(z)-tL(tz)\in M_2(\Gamma_0(N)) \mbox{~for all $0<t \mid N$}.
\eeqar
\noindent
We note that if $ f(z), ~ g(z) \in M_2(\Gamma_0(N))$, then for all $a,b\in \cc$, we have 
\beqar
af(z) + bg(z) \in M_2(\Gamma_0(N)).
\eeqar
We use the Sturm bound $S(N)$  to show the equality of two modular forms in the same modular  space. 
We just need  to check the equality of the  first $S(N) +1$ coefficients of their  Fourier series expansions.  
The following theorem is a special case for $M_2(\Gamma_0(N))$, see \cite[Theorem 3.13]{Kilford} for a general case.

\begin{theorem}
Let $f(z), ~g(z)\in M_2(\Gamma_0(N))$ with the Fourier series expansions\\
$\ds f(z)=\sum_{n=0}^{\infty} a_n q^{ n }$ and $\ds g(z)=\sum_{n=0}^{\infty} b_n q^{ n }$.
The Sturm bound $S(N)$ for the modular space $M_2(\Gamma_0(N))$ is given by
\beqars
\ds S(N)=\frac{N}{6} \ds \prod_{p|N} \big( 1+1/p \big),  
\eeqars
and so if $\ds a_n=b_n$ for all $ n \leq S(N)$
then $f(z) = g(z)$. 
\end{theorem}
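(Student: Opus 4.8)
The plan is to reduce the statement about two forms to a vanishing statement about a single form, and then to invoke the valence formula for $\Gamma_0(N)$. First I would set $h(z) = f(z) - g(z)$; by the linearity recorded in (2.3), $h \in M_2(\Gamma_0(N))$, and its Fourier coefficients are $a_n - b_n$. The hypothesis $a_n = b_n$ for all $n \leq S(N)$ says exactly that the $q$-expansion of $h$ begins no earlier than $q^{S(N)+1}$, i.e. the order of vanishing of $h$ at the cusp $\infty$ satisfies $\mathrm{ord}_\infty(h) \geq S(N)+1$. It therefore suffices to prove that a nonzero form in $M_2(\Gamma_0(N))$ cannot vanish to order exceeding $S(N)$ at $\infty$.

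The key tool is the valence formula, which asserts that for a nonzero modular form of weight $k$ on $\Gamma_0(N)$ the total order of vanishing over a fundamental domain --- summing the orders at all interior points (each weighted by the reciprocal of the order of its elliptic stabilizer) and at all cusps --- equals $\frac{k}{12}[\mathrm{SL}_2(\zz):\Gamma_0(N)]$. Using the standard index $[\mathrm{SL}_2(\zz):\Gamma_0(N)] = N\prod_{p\mid N}(1+1/p)$ and specializing to $k=2$, this total equals $\frac{N}{6}\prod_{p\mid N}(1+1/p) = S(N)$. Since the cusp $\infty$ has width $1$ for $\Gamma_0(N)$ (the translation $z \mapsto z+1$ lies in $\Gamma_0(N)$), the geometric order of vanishing at $\infty$ appearing in the valence formula coincides with the $q$-order, namely the index of the first nonzero Fourier coefficient.

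Assembling these, suppose $h \neq 0$. Every local order of vanishing is nonnegative (holomorphy of $h$ on $\hh$ and at all cusps), while the contribution from $\infty$ alone is $\mathrm{ord}_\infty(h) \geq S(N)+1$; hence the total strictly exceeds $S(N)$, contradicting the valence formula. Therefore $h \equiv 0$ and $f = g$. I expect the main obstacle to be a clean justification of the valence formula for $\Gamma_0(N)$ itself, whose standard derivation requires careful bookkeeping of elliptic points and cusp widths. The cleanest route I would take is to transfer to the full modular group: writing $\mathrm{SL}_2(\zz) = \bigsqcup_j \Gamma_0(N)\gamma_j$ with $m = [\mathrm{SL}_2(\zz):\Gamma_0(N)]$ coset representatives, the product $\prod_j (h|_k \gamma_j)$ is a modular form of weight $km$ on $\mathrm{SL}_2(\zz)$, to which the elementary valence formula $\mathrm{ord}_\infty + \frac{1}{2}\mathrm{ord}_i + \frac{1}{3}\mathrm{ord}_\rho + \cdots = km/12$ applies directly; tracking how the orders transform under the slash action then yields the bound for $\Gamma_0(N)$. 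Alternatively, since the present statement is merely the $k=2$ case, one simply cites \cite[Theorem 3.13]{Kilford} for the general result.
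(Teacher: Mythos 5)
Your proof is correct, but it is genuinely more than the paper does: the paper offers no argument at all for this theorem, simply stating it as the weight-$2$ special case of \cite[Theorem 3.13]{Kilford} (your closing sentence is exactly the paper's entire ``proof''). Your valence-formula argument is the standard proof of Sturm's theorem and is sound as sketched: passing to $h=f-g$ via the linearity (2.3), noting $\mathrm{ord}_\infty(h)$ is the $q$-order because the cusp $\infty$ has width $1$, and transferring to $\mathrm{SL}_2(\zz)$ through the norm $\prod_j (h|_k\gamma_j)$, which has weight $2m$ with $m=[\mathrm{SL}_2(\zz):\Gamma_0(N)]=N\prod_{p\mid N}(1+1/p)$, so the elementary valence formula caps the total vanishing at $2m/12=S(N)$ while the $\infty$-contribution alone already exceeds it (the other factors contribute nonnegative cusp orders since $h$ is holomorphic at all cusps). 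Two cosmetic points: $S(N)$ need not be an integer for general $N$ (it is for the paper's eight levels), so the clean statement is $\mathrm{ord}_\infty(h)\geq\lfloor S(N)\rfloor+1>S(N)$ rather than $\mathrm{ord}_\infty(h)\geq S(N)+1$; and the conjugated factors $h|_k\gamma_j$ have expansions in fractional powers of $q$, but only nonnegativity of their orders is needed, so this causes no trouble. What each approach buys: the paper's citation is economical and appropriate for standard background, while your argument makes the result self-contained and makes visible where the formula $S(N)=\frac{2}{12}\cdot N\prod_{p\mid N}(1+1/p)$ actually comes from.
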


Using Theorem 2.2 we calculate $S(N)$ for $N\in \{30, 33, 35, 38, 40, 42, 44, 45 \}$ as 
\beqars
&& S(30)=12,~S(33)=8,~S(35)=8,~S(38)=10,\\
&& S(40)=12,~S(42)=16,~S(44)=12,~S(45)=12.
\eeqars

%\newpage

%%%%%%%%%%%%%%%%%%%%%%%% Section 3
\section{Main Results}

\begin{theorem}
Let $N \in \{ 30, 33, 35, 38, 40, 42, 44, 45\}$. In Table {\rm 3.1} below we express 
all the newforms $F_N(z)$ in $M_2(\Gamma_0(N))$  as linear combinations of eta quotients and Eisenstein series. 
\renewcommand{\arraystretch}{2.4}
\begin{longtable}{l l l l} 
\caption*{\hspace{-20mm}{\rm \normalsize Table  3.1: Weight $2$ newforms of levels $30, 33, 35, 38, 40, 42, 44, 45$} } \\
\hline \hline
Level & Name &  The eta quotients and Eisenstein series \\ 
\hline \hline
\endfirsthead
%{\textit{Continued from previous page}} \\
\hline
\hline 
Level & Name & The eta quotients and Eisenstein series \\ 
\hline \hline
\endhead
\hline % {\textit{Continued on next page}} \\
\endfoot
\hline
\endlastfoot
$30$ & $F_{30}(z)=$ & $\displaystyle 6\frac{\eta(z)\eta(3z)\eta^{3}(10z)\eta^{3}(30z)}{\eta(2z)\eta(5z)\eta(6z)\eta(15z)}$ \\
 & & $\ds +2L_{2}(z)+L_{3}(z)+\frac{1}{5}L_{5}(z)-2 L_{6}(z)-\frac{2}{5}L_{10}(z)$ \\
 & & $\ds -\frac{1}{5}L_{15}(z)+\frac{2}{5}L_{30}(z)$  \\
\hline
$33$ &  $F_{33}(z)=$ &  $\displaystyle -10\frac{\eta^{3}(3z)\eta^{3}(33z)}{\eta(z)\eta(11z)}-6 \eta^{2}(3z)\eta^{2}(33z)$\\
&& $\ds -2\eta(z)\eta(3z)\eta(11z)\eta(33z) +\frac{1}{3}L_3(z)+L_{11}(z)-\frac{1}{3} L_{33}(z)$\\
\hline
$35$ &  $F_{35}(z)=$ & $\displaystyle 3\frac{\eta^3(5z)\eta^3(7z)}{\eta(z)\eta({35z})} -\frac{\eta^3(z)\eta^3({35z})}{\eta(5z)\eta({7z})}$ \\ & & $\ds +\frac{4}{5}L_5(z)-\frac{5}{7}L_7(z) -\frac{73}{35}L_{35}(z)$\\
\hline
$38$ & $\ds F_{38A}(z)=$ & { $ \displaystyle \frac{3}{7}\frac{\eta^3(z)\eta^3({38z})}{\eta(2z)\eta({19z})} -\frac{18}{7}\frac{\eta^4(2z)\eta^4({38z})}{\eta^2(z)\eta^2({19z})} -\frac{3}{7}\frac{\eta^3(2z)\eta^3({19z})}{\eta(z)\eta({38z})}$}\\
&&  $ \ds  -\frac{9}{28}\frac{\eta^4(z)\eta^4({19z})}{\eta^2(2z)\eta^2({38z})} -\frac{1}{7}L_2(z)
+\frac{1}{7}L_{19}(z)+\frac{1}{7}L_{38}(z) $  \\[1mm]
\hline
$38$ & $F_{38B}(z)=$ & { $\displaystyle \frac{\eta^3(2z)\eta^3({19z})}{\eta(z)\eta({38z})} 
+\frac{\eta^3(z)\eta^3({38z})}{\eta(2z)\eta({19z})} $}\\[1mm]
\hline
$40$ & $F_{40}(z)=$ & $\displaystyle -4\frac{\eta^{2}(z)\eta^{2}(5z)\eta^{2}(8z)\eta^{2}(40z)}{\eta(2z)\eta(4z)\eta(10z)\eta(20z)} 
+\frac{3}{2}L_2(z)+\frac{3}{2}L_4(z)+L_5(z)$\\
&& $\ds -L_8(z)-\frac{3}{2}L_{10}(z)-\frac{3}{2}L_{20}(z)+L_{40}(z)$\\
\hline
$42$ & $F_{42}(z)=$ & $\ds -8\frac{\eta(2z)\eta(3z)\eta^{2}(7z)\eta^{2}(42z)}{\eta(z)\eta(6z)} -8\frac{\eta(z)\eta(6z)\eta^{2}(14z)\eta^{2}(21z)}{\eta(2z)\eta(3z)}$\\
&&$\ds + L_2(z)-L_3(z)+L_6(z)+\frac{1}{7}L_7(z)-\frac{1}{7}L_{14}(z)$ \\
&& $\ds +\frac{1}{7}L_{21}(z)-\frac{1}{7}L_{42}(z)$\\
\hline
$44$ & $F_{44}(z)=$ & $\ds 3\frac{\eta^{3}({4z})\eta^{3}({11z})}{\eta(z)\eta({44z})}-3\frac{\eta^{3}(z)\eta^{3}({44z})}{\eta({4z})\eta({11z})}-2L_4(z)$ \\
&& $\ds +2L_{11}(z)-2L_{44}(z)$\\
\hline
$45$ & $F_{45}(z)=$ & $\ds 2\frac{\eta({3z})\eta^{2}({5z})\eta^{2}({9z})\eta({15z})}{\eta(z)\eta({45z})} -2\frac{\eta^{2}(z)\eta({3z})\eta({15z})\eta^{2}({45z})}{\eta({5z})\eta({9z})} $\\
&&$\ds +L_5(z)-\frac{2}{3}L_9(z)-\frac{2}{5}L_{15}(z)-\frac{14}{15}L_{45}(z)$\\
\end{longtable}
\end{theorem}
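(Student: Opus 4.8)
The plan is to treat Theorem 3.1 as a finite verification: for each level $N$ and each newform listed in a row of Table 3.1, the right-hand side is \emph{a priori} an element of $M_2(\Gamma_0(N))$, so its equality with the newform $F_N(z)$ can be decided by comparing only finitely many Fourier coefficients, as licensed by the Sturm bound (Theorem 2.2).

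First I would confirm that every proposed linear combination lies in $M_2(\Gamma_0(N))$. The eta quotients appearing in the table are precisely those of (2.1), which are already shown there to belong to $M_2(\Gamma_0(N))$ via the holomorphy-at-cusps order conditions. Each Eisenstein series $L_t(z)$ with $0<t\mid N$ lies in $M_2(\Gamma_0(N))$ by (2.2). Since $M_2(\Gamma_0(N))$ is a $\cc$-vector space, by (2.3) every $\cc$-linear combination in the table again lies in $M_2(\Gamma_0(N))$. On the other side, the forms $F_N(z)$ are the normalized Hecke eigenforms in the new subspace of cusp forms of $M_2(\Gamma_0(N))$; their $q$-expansions are pinned down by the theory of newforms (and tabulated in standard references), in particular each has vanishing constant term.

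Next, for each row I would expand the right-hand side as a $q$-series up to and including $q^{S(N)}$, using the product formula for $\eta(z)$ on each eta quotient and the defining series for each $L_t(z)$, and compare it coefficient by coefficient with the expansion of $F_N(z)$. Because both sides lie in $M_2(\Gamma_0(N))$, agreement of the coefficients $a_n=b_n$ for all $n\le S(N)$ forces the two forms to coincide, by Theorem 2.2. The relevant Sturm bounds $S(30)=12$, $S(33)=8$, $S(35)=8$, $S(38)=10$, $S(40)=12$, $S(42)=16$, $S(44)=12$, $S(45)=12$ keep each of these checks short and finite.

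The genuinely creative work, carried out before the verification, is the determination of the rational coefficients recorded in Table 3.1. For this I would fix at each level a spanning set formed from the relevant eta quotients of (2.1) together with the admissible $L_t(z)$, write $F_N(z)$ as an unknown $\cc$-linear combination of them, and solve the linear system obtained by matching the first several Fourier coefficients, the constant-term equation forcing the Eisenstein contributions to cancel so that the combination becomes cuspidal. The main obstacle is therefore twofold: first, selecting eta quotients whose span, once enlarged by the $L_t(z)$, actually contains the cuspidal newform; and second, executing the bulky but routine $q$-expansion arithmetic without error. Once the coefficients are in hand, Theorem 2.2 collapses the proof to a bounded, mechanical comparison.
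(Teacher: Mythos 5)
Your proposal is correct and follows essentially the same route as the paper: both arguments place each tabulated linear combination in $M_2(\Gamma_0(N))$ via (2.1)--(2.3), take the newforms' initial Fourier coefficients from Cremona's tables, and conclude equality by matching the first $S(N)+1$ coefficients under the Sturm bound of Theorem 2.2. Your closing discussion of how the rational coefficients would be found by solving a linear system (with the constant term forcing the Eisenstein parts to cancel) is a sensible account of the discovery step, but it plays no role in, and does not alter, the verification argument the paper actually gives.
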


\begin{proof}
%(Elliptic curves paperinda benzer resultlar var, onlari da bahset.)
In \cite[Table 3]{Cremona} each newform of weight $2$ and level less than $1000$ has been given by listing its Fourier coefficients for primes up to $100$. Using the results from  \cite[p. 25]{Cremona} together with \cite[Table 3]{Cremona} we determine the first $S(N) +1$ terms of the 
Fourier series expansions of all the newforms of  weight $2$  and levels $N= 30$, $33$, $35$, $38$, $40$, $42$, $44$, $45$. 
We give them in Table 3.2 below.

\renewcommand{\arraystretch}{1.4}
\begin{longtable}{l l l l} 
\caption{First $S(N) +1$ terms of the Fourier series expansions of the  newforms of weight $2$ and level $N$} \\
\hline \hline
 $N$ & First $S(N)+1$ terms of the newforms of level $N$ \\ 
\hline \hline 
\endfirsthead
%{\textit{Continued from previous page}} \\
%\hline
%\hline 
%Level & First $S(N)$ terms of the normalized newforms of level $N$ \\ 
\hline \hline
\endhead
\hline % {\textit{Continued on next page}} \\
\endfoot
\hline
\endlastfoot
 ${30}$ & $q - q^{2 }+q^{ 3 } +q^{  4 } -q^{ 5} -q^{6} -4q^{ 7 } -q^{ 8 }  +q^{ 9 } +q^{ 10 }+q^{ 12 } +O(q^{ 13 })$, \\
 ${33}$ & $q+ q^{2} -q^{3} -q^{4} -2q^{5} -q^{6} +4q^{7}-3q^{ 8 }+O( q^{9})$, \\
 ${35}$ & $q+ q^{3} -2q^{4} -q^{5} +q^{7} +O(q^{9})$, \\
 ${38A}$ & $q- q^{2} +q^{3} +q^{4} -q^{6} -q^{7} -q^{8} -2q^{9} +O(q^{11})$, \\
 ${38B}$ & $q+ q^{2} -q^{3} +q^{4} -4q^{5} -q^{6} +3q^{7} +q^{8} -2q^{9} -4q^{ 10 }+O(q^{11})$,\\
 ${40}$ & $q +q^{5} -4q^{7} -3q^{9} +4q^{11} +O(q^{13})$, \\
 ${42}$ &  $q+ q^{2} -q^{3} +q^{4} -2q^{5} -q^{6} -q^{7} +q^{8} +q^{9} -2q^{10} -4q^{11} -q^{12}$\\
 &\hspace{2mm} $+6q^{13} -q^{14} +2q^{15}+q^{ 16 } +O(q^{17})$,\\
${44}$ & $q +q^{3} -3q^{5} +2q^{7} -2q^{9} -q^{11} +O(q^{13})$,\\
${45}$ & $q +q^{2} -q^{4} -q^{5} -3q^{8} -q^{10} +4q^{11} +O(q^{13})$.
\end{longtable}

\noindent 
%We are now ready to prove the assertions of the theorem. 
Let us consider the function $F_{30}(z)$ from Table 3.1. 
By (2.1)--(2.3), we have $F_{30}(z)\in M_2(\Gamma_0(30))$. 
Using MAPLE we calculate the first $S(30)+1=13$ terms of the Fourier series expansion of $F_{30}(z)$ as
\beqars
F_{30}(z)=q - q^{2 }+q^{ 3 } +q^{  4 } -q^{ 5} -q^{6} -4q^{ 7 } -q^{ 8 }  +q^{ 9 } +q^{ 10 } +q^{ 12 } +O(q^{ 13 }).
\eeqars
Since the first $13$ terms of the  newform of $M_2(\Gamma_0(30))$ in Table 3.2 are the  same as the first $13$ terms of $F_{30}(z)$  
then it follows from Theorem 2.1 that $F_{30}(z)$ is equal to the  newform of level $30$. 
The remaining cases can be proven similarly. 
Note that there are two different weight $2$  newforms for level $38$, 
and following the notation from  \cite[Table 3]{Cremona} we label them as $38A$ and $38B$ in Table 3.2, 
which correspond to $F_{38A}(z)$ and $F_{38B}(z)$ in Table~3.1, respectively.
\end{proof}

%%%%%%%%%%%%%%% Section 4
\section{Some arithmetic properties of  $| E (\zz_p)|$ for certain elliptic curves}

We recall that $p$ denotes a prime. We first state the $a_p$ version of the Modularity Theorem, see \cite[Theorem 8.8.1]{DiamondShurman}, 
which gives a relation between $| E (\zz_p)|$ and the Fourier coefficients of the corresponding newform. 
%We use this relation together with certain properties of the function $F_N(z)$, 
%see Table 1, to give some congruence relations for $|E(\zz_p)|$.

\begin{theorem} {\rm (Modularity Theorem, Version $a_p$)} 
Let $a_1, a_2, a_3,a_4, a_6 \in \zz$. Let E be an elliptic curve over $\qq$ with conductor $N$ given by
\beqars
y^2 + a_1xy + a_3y = x^3 + a_2x^2 + a_4x + a_6, 
\eeqars
and let
\beqars
 E(\zz_p):=\{\infty\}\cup \{ (x,y)\in \zz_p \times \zz_p \vert y^2 + a_1xy + a_3y = x^3 + a_2x^2 + a_4x + a_6 \} .
\eeqars
Then for some newform $f\in S_2(\Gamma_0(N))$, we have
\beqars
[p]f_E(z)=p+1- | E(\zz_p) |  \mbox{ for $p\nmid N$}.
\eeqars
\end{theorem}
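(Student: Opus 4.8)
The plan is to split the statement into one deep analytic input, namely the Modularity Theorem itself as quoted from \cite[Theorem 8.8.1]{DiamondShurman}, and one purely elementary identity that rewrites the trace of Frobenius as the cardinality $| E(\zz_p) |$. First I would invoke Modularity in its $L$-function form: attached to the elliptic curve $E/\qq$ of conductor $N$ there is a normalized newform $f_E \in S_2(\Gamma_0(N))$ whose Hasse--Weil $L$-function agrees with the $L$-function of $f_E$. Comparing the Euler factors at a prime $p \nmid N$ identifies the $p$-th Fourier coefficient $[p]f_E(z)$ with the trace of Frobenius $a_p(E)$ attached to the reduction of $E$ at $p$. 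This identification is the entire substance of the theorem, and it is exactly the part I would cite rather than reprove.

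Next I would unwind the meaning of $a_p(E)$ at a prime of good reduction. Because $p \nmid N$, the curve $E$ has good reduction at $p$, so its integral Weierstrass model reduces to a smooth curve over $\zz_p = \zz/p\zz$; the set $E(\zz_p)$ in the statement is precisely the set of its $\zz_p$-points, namely the solutions of the reduced Weierstrass congruence together with the point at infinity. By the definition of the Hasse--Weil $L$-function (equivalently, by Weil's point-counting formula for the Frobenius endomorphism) one has $a_p(E) = p + 1 - | E(\zz_p) |$.

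Combining the two steps, for every $p \nmid N$ I obtain
\[
[p]f_E(z) = a_p(E) = p + 1 - | E(\zz_p) |,
\]
which is the asserted formula with $f = f_E$. I would close this part by noting that the newform produced by Modularity has rational, hence rational-integral, Fourier coefficients, so both sides of the identity lie in $\zz$ and the comparison is meaningful coefficient by coefficient; moreover the $a_p$ depend only on the isogeny class of $E$, so the choice of Weierstrass model within that class is immaterial.

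The sole genuine obstacle is the Modularity Theorem itself; the remaining translation between the Frobenius trace and $| E(\zz_p) |$ is routine bookkeeping about good reduction. Since the statement is presented as a direct quotation of \cite[Theorem 8.8.1]{DiamondShurman}, the proof in this paper consists of recording that citation together with this elementary reformulation, and no further argument is required.
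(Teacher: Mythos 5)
Your proposal is correct and matches the paper exactly: the paper states this result as a direct citation of \cite[Theorem 8.8.1]{DiamondShurman} and offers no proof of its own, and your write-up properly records that citation together with the routine good-reduction translation $a_p(E)=p+1-|E(\zz_p)|$ between the Frobenius trace and the point count over $\zz_p=\zz/p\zz$. Nothing further is required.
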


%\newpage

We deduce  the following theorem from \cite[Table 1]{Cremona}.

\begin{theorem}
Table {\rm 4.1} below is a list of elliptic curves, more specifically strong Weil curves, corresponding to the newforms given in Table {\em 3.1}.

%\begin{center}
\renewcommand{\arraystretch}{1.4}
\begin{longtable}{c c r r r r r}
\caption*{\rm Table 4.1: $y^2 + a_1xy + a_3y = x^3 + a_2x^2 + a_4x + a_6$} \\
\hline \hline
$Newform$ & Strong Weil curve & $a_1$ & $a_2$ & $a_{3}$ & $a_4$ & $a_6$\\  
\hline \hline 
\endfirsthead
%{\textit{Continued from previous page}} \\
\hline
$Newform$ & Elliptic curve & $a_1$ & $a_2$ & $a_{3}$ & $a_4$ & $a_6$\\ 
\hline
\endhead
\hline % {\textit{Continued on next page}} \\
\endfoot
\hline
\endlastfoot
$ F_{30}(z) $ & $ E_{30A} $ & $ 1 $ & $ 0 $ & $ 1 $ & $ 1 $ & $ 2 $\\
$ F_{33}(z) $ & $ E_{33A} $ & $ 1 $ & $ 1 $ & $ 0 $ & $ -11  $ & $ 0 $\\
$ F_{35}(z) $ & $ E_{35A} $ & $ 0 $ & $ 1 $ & $ 1 $ & $ 9  $ & $ 1  $\\
$ F_{38A}(z) $ & $ E_{38A} $ & $ 1 $ & $ 0 $ & $ 1 $ & $ 9  $ & $ 90  $\\
$ F_{38B}(z) $ & $ E_{38B} $ & $ 1 $ & $ 1 $ & $ 1 $ & $ 0  $ & $ 1  $\\
$ F_{40}(z) $ & $ E_{40A} $ & $ 0 $ & $ 0 $ & $ 0 $ & $ -7  $ & $ -6  $\\
$ F_{42}(z) $ & $ E_{42A} $ & $ 1 $ & $ 1 $ & $ 1 $ & $ -4  $ & $ 5 $\\
$ F_{44}(z) $ & $ E_{44A} $ & $ 0 $ & $ 1 $ & $ 0 $ & $ 3  $ & $ -1  $\\
$ F_{45}(z) $ & $ E_{45A} $ & $ 1 $ & $ -1 $ & $ 0 $ & $ 0  $ & $ -5  $
\end{longtable}
%\end{center}
\end{theorem}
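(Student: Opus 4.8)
The plan is to establish each row of Table 4.1 by means of the Modularity Theorem (Theorem 4.2), which attaches to every newform $f \in S_2(\Gamma_0(N))$ an isogeny class of elliptic curves $E/\qq$ of conductor $N$ satisfying
\[
[p]f(z) = p + 1 - | E(\zz_p) | \quad \mbox{for all } p \nmid N .
\]
First, for each candidate curve $E$ recorded in Table 4.1, I would confirm that the Weierstrass coefficients $(a_1,a_2,a_3,a_4,a_6)$ define a curve whose conductor is exactly the level $N$ of the corresponding newform $F_N(z)$; applying Tate's algorithm to each given model settles this as a finite computation.

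Next I would compute the trace of Frobenius $a_p(E) := p + 1 - | E(\zz_p) |$ for the small primes $p \nmid N$ by directly enumerating the solutions $(x,y) \in \zz_p \times \zz_p$ of the Weierstrass equation and adjoining the point at infinity. These values would then be matched against the Fourier coefficients $[p]F_N(z)$ already computed in Theorem 3.1 and displayed in Table 3.2. By the Modularity Theorem together with multiplicity one for newforms -- a newform of level $N$ is determined by its eigenvalues $a_p$ over the primes $p \nmid N$ -- agreement of these coefficients for sufficiently many primes (up to the bound at which the listed coefficients are known) identifies $E$ as a member of the isogeny class attached to $F_N(z)$.

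The main obstacle is that the Modularity Theorem pins down only the isogeny class, whereas the statement asserts the precise strong Weil representative within it. To resolve this I would invoke the finer data of \cite[Table 1]{Cremona}, in which the strong Weil curve of each isogeny class is distinguished by the minimality of its modular parametrization; reading off the strong Weil curve for each relevant conductor and confirming, via the coefficient comparison above, that it corresponds to the intended newform completes the argument. Since the levels $30, 33, 35, 40, 42, 44, 45$ each carry a single weight $2$ newform while level $38$ carries exactly two (the forms $F_{38A}(z)$ and $F_{38B}(z)$ distinguished in Table 3.1), the assignment is unambiguous, and the verification reduces to the routine point counts together with the reference to Cremona's classification.
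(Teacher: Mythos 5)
Your proposal is correct and ultimately rests on the same foundation as the paper, which offers no argument beyond deducing the table directly from Cremona's Table 1 \cite{Cremona}; your added steps (conductor verification via Tate's algorithm, point counts matched against the coefficients in Table 3.2, multiplicity one) are a sound fleshing-out of that citation, since identifying the \emph{strong Weil} representative within the isogeny class cannot be done by point counting and must come from Cremona's classification in both your account and the paper's.
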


We use Theorems 3.1, 4.1 and 4.2 to give  generating functions  
for the  group order  $| E_{N}(\zz_p) |$ of  elliptic curves  in Table 4.1.

\begin{theorem} 
Consider the elliptic curves listed in Table {\rm 4.1}. 
We have
\beqars
&& | E_{30A}(\zz_p) | = -6[p]\left(\frac{\eta(z)\eta(3z)\eta^{3}(10z)\eta^{3}(30z)}{\eta(2z)\eta(5z)\eta(6z)\eta(15z)}\right)  \mbox{~for all $p\nmid 30$,}\\
&& | E_{33A}(\zz_p) | =2[p] \left(5\frac{\eta^{3}(3z)\eta^{3}(33z)}{\eta(z)\eta(11z)}+ \eta(z)\eta(3z)\eta(11z)\eta(33z)\right) \mbox{~for all $p\nmid 33$,}\\ 
&& | E_{35A}(\zz_p) | =   3(p+1)-[p]\left(3\frac{\eta^3(5z)\eta^3(7z)}{\eta(z)\eta({35z})} -\frac{\eta^3(z)\eta^3({35z})}{\eta(5z)\eta({7z})}  \right) \mbox{~for all $p\nmid 35$,}\\
&& | E_{38A}(\zz_p) | =  \frac{6}{7}(p+1)- \frac{3}{7}[p]\left( \frac{\eta^3(z)\eta^3({38z})}{\eta(2z)\eta({19z})} - 6\frac{\eta^4(2z)\eta^4({38z})}{\eta^2(z)\eta^2({19z})} \right. \\
&& \hspace{42mm} \left. - \frac{\eta^3(2z)\eta^3({19z})}{\eta(z)\eta({38z})} -\frac{3}{4}\frac{\eta^4(z)\eta^4({19z})}{\eta^2(2z)\eta^2({38z})}  \right) \mbox{~for all $p\nmid 38 $,}\\ 
&& | E_{38B}(\zz_p) | =  (p+1)-[p]\left( \frac{\eta^3(2z)\eta^3({19z})}{\eta(z)\eta({38z})} +\frac{\eta^3(z)\eta^3({38z})}{\eta(2z)\eta({19z})}  \right) \mbox{~for all $p\nmid 38$,}\\ 
&& | E_{40A}(\zz_p) | = 4[p] \left(\frac{\eta^{2}(z)\eta^{2}(5z)\eta^{2}(8z)\eta^{2}(40z)}{\eta(2z)\eta(4z)\eta(10z)\eta(20z)}\right) 
\mbox{~for all $p\nmid 40$,}\\ 
&& | E_{42A}(\zz_p) | = 8[p]  \left(\frac{\eta(2z)\eta(3z)\eta^{2}(7z)\eta^{2}(42z)}{\eta(z)\eta(6z)}+\frac{\eta(z)\eta(6z)\eta^{2}(14z)\eta^{2}(21z)}{\eta(2z)\eta(3z)}\right) \\
&&\hspace{115mm} \mbox{~for all $p\nmid 42$.} \\
&& | E_{44A}(\zz_p) | = 3(p+1)- 3[p]\left(  \frac{\eta^{3}({4z})\eta^{3}({11z})}{\eta(z)\eta({44z})}- \frac{\eta^{3}(z)\eta^{3}({44z})}{\eta({4z})\eta({11z})} \right) \mbox{~for all $p\nmid 44$,}\\ 
&& | E_{45A}(\zz_p) | =   2(p+1)- 2[p]\left( \frac{\eta({3z})\eta^{2}({5z})\eta^{2}({9z})\eta({15z})}{\eta(z)\eta({45z})} \right.\\
&&\hspace{60mm} \left. - \frac{\eta^{2}(z)\eta({3z})\eta({15z})\eta^{2}({45z})}{\eta({5z})\eta({9z})} \right)   \mbox{~for all $p\nmid 45$,}
\eeqars
\end{theorem}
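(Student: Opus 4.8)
The plan is to combine the three theorems already established. The key identity is the $a_p$ version of the Modularity Theorem (Theorem 4.1): for an elliptic curve $E$ of conductor $N$ with associated newform $f_E \in S_2(\Gamma_0(N))$, one has
\beqars
[p]f_E(z) = p+1 - |E(\zz_p)| \mbox{ for all } p\nmid N,
\eeqars
which rearranges to $|E(\zz_p)| = (p+1) - [p]f_E(z)$. Theorem 4.2 supplies, for each $N$ in our list, the specific strong Weil curve $E_{NA}$ whose associated newform is exactly the $F_N(z)$ of Table 3.1; Theorem 3.1 in turn expresses each such $F_N(z)$ as an explicit linear combination of eta quotients and Eisenstein series. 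So for each curve the proof is a matter of substituting the Table 3.1 expression for $F_N(z)$ into the Modularity relation and simplifying.

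First I would fix a level $N$ and write $|E_{NA}(\zz_p)| = (p+1) - [p]F_N(z)$ using Theorems 4.1 and 4.2. Next I would substitute the explicit formula for $F_N(z)$ from Table 3.1 and apply the coefficient operator $[p]$ term by term, using its linearity (so that $[p](af+bg) = a[p]f + b[p]g$ for constants $a,b$). The crucial simplification is that for a prime $p\nmid N$ the coefficient operator annihilates the Eisenstein contributions in a controlled way: for $L_t(z) = L(z) - tL(tz)$ one has $[p]L(z) = \sigma(p) = p+1$, while $[p]L(tz) = 0$ whenever $t\nmid p$, i.e.\ for every $t\mid N$ with $t>1$ since then $p\nmid N$ forces $t\neq p$. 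Hence $[p]L_t(z) = p+1$ for $t=1$-type normalization, and more precisely each $L_t$ contributes a clean multiple of $(p+1)$ to $[p]F_N(z)$, collapsing the entire Eisenstein part into a single coefficient times $(p+1)$. This is exactly what produces the closed-form $(p+1)$ terms (e.g.\ the $3(p+1)$ for $E_{35A}$, the $2(p+1)$ for $E_{45A}$) and why some curves, such as $E_{30A}$ and $E_{40A}$, have no $(p+1)$ term at all: the Eisenstein coefficients in their $F_N$ sum to zero.

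After extracting the Eisenstein contribution, what remains is $-[p]$ applied to the eta-quotient part, and I would simply collect the constant coefficients to match the asserted formulas. The main obstacle, such as it is, is purely bookkeeping: verifying for each level that the rational coefficients of the $L_t(z)$ terms in Table 3.1, when multiplied by $(p+1)$ and summed, yield precisely the stated $(p+1)$-coefficient. For instance for $F_{35}(z)$ the Eisenstein coefficients are $\tfrac{4}{5}, -\tfrac{5}{7}, -\tfrac{73}{35}$, and one checks that $\tfrac{4}{5}+(-\tfrac{5}{7})+(-\tfrac{73}{35}) = -3$, so that $-[p]F_{35}(z)$ produces $+3(p+1)$ plus the eta-quotient coefficients; the other levels are verified identically. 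Since each of these is a finite arithmetic check and every analytic ingredient is already provided by Theorems 3.1, 4.1 and 4.2, there is no genuine difficulty, only the need to carry out the substitution carefully and confirm the coefficient arithmetic for all nine curves.
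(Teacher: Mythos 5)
Your proposal follows essentially the same route as the paper: combine Theorems 4.1 and 4.2 to write $|E_{NA}(\zz_p)| = (p+1) - [p]F_N(z)$ for $p\nmid N$, substitute the Table 3.1 expression for $F_N(z)$, and use linearity of $[p]$ together with the observation that $[p]L_t(z) = \sigma(p) = p+1$ for each $t>1$ dividing $N$ (since $p\nmid N$ forces $t\nmid p$, so $[p]L(tz)=0$); the paper does exactly this, carrying out the computation explicitly for $E_{30A}$ and $E_{45A}$ and declaring the remaining cases similar. However, your one worked verification contains an arithmetic slip: for $F_{35}$ the Eisenstein coefficients sum to $\frac{4}{5}-\frac{5}{7}-\frac{73}{35} = \frac{28-25-73}{35} = -2$, not $-3$. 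Your value of $-3$ only reproduces the stated $3(p+1)$ because you simultaneously dropped the $(p+1)$ contributed by the modularity relation itself, so two errors cancel; had you applied your own rule consistently with $s=-3$ you would have obtained $4(p+1)$, contradicting the theorem. The correct uniform bookkeeping is: if the $L_t$-coefficients in the Table 3.1 expression for $F_N(z)$ sum to $s$, then $|E_{NA}(\zz_p)| = (1-s)(p+1) - [p](\text{eta-quotient part})$; for $N=35$ this gives $(1-(-2))(p+1) = 3(p+1)$ as required, for $N=30$ the coefficients sum to $s=1$ so the $(p+1)$ terms vanish, and for $N=45$ they sum to $s=-1$ giving $2(p+1)$, matching the paper's two explicit computations. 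With this correction your argument is complete and coincides with the paper's proof.
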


\begin{proof}
We just prove the first and the last  equalities as the remaining ones can be proven similarly. 
By Theorems 3.1, 4.1 and 4.2, for all $p\nmid 30$, we have, 
\beqars
&& \hspace{-10mm}  | E_{30A}(\zz_p) | =  p+1-[p]F_{30A}(z) \\
&&  = p+1- [p]\Big( 6\frac{\eta(z)\eta(3z)\eta^{3}(10z)\eta^{3}(30z)}{\eta(2z)\eta(5z)\eta(6z)\eta(15z)} \\
&&\hspace{8mm}  +2L_{2}(z)+L_{3}(z)+\frac{1}{5}L_{5}(z)-2 L_{6}(z)-\frac{2}{5}L_{10}(z)-\frac{1}{5}L_{15}(z)+\frac{2}{5}L_{30}(z)\Big)\\
&&=p+1-6 [p]\Big( \frac{\eta(z)\eta(3z)\eta^{3}(10z)\eta^{3}(30z)}{\eta(2z)\eta(5z)\eta(6z)\eta(15z)} \Big)-\sigma(p)\\
&&= -6 [p]\Big( \frac{\eta(z)\eta(3z)\eta^{3}(10z)\eta^{3}(30z)}{\eta(2z)\eta(5z)\eta(6z)\eta(15z)} \Big),
\eeqars
which completes the proof of the first equality. 

By Theorems 3.1, 4.1 and 4.2, for all $p\nmid 45$, we have  
\beqars
&&\hspace{-5mm}  | E_{45A}(\zz_p) |  =  p+1-[p]F_{45}(z) \\
&&  = p+1- [p]\Big( 2\frac{\eta({3z})\eta^{2}({5z})\eta^{2}({9z})\eta({15z})}{\eta(z)\eta({45z})} -2\frac{\eta^{2}(z)\eta({3z})\eta({15z})\eta^{2}({45z})}{\eta({5z})\eta({9z})} \\
&&\hspace{8mm} +L_5(z)-\frac{2}{3}L_9(z)-\frac{2}{5}L_{15}(z)-\frac{14}{15}L_{45}(z) \Big)\\
&& = p+1-2 [p]\Big( \frac{\eta({3z})\eta^{2}({5z})\eta^{2}({9z})\eta({15z})}{\eta(z)\eta({45z})} -\frac{\eta^{2}(z)\eta({3z})\eta({15z})\eta^{2}({45z})}{\eta({5z})\eta({9z})} \Big) 
+\sigma(p)\\
&& = 2(p+1)-2 [p]\Big(  \frac{\eta({3z})\eta^{2}({5z})\eta^{2}({9z})\eta({15z})}{\eta(z)\eta({45z})} -\frac{\eta^{2}(z)\eta({3z})\eta({15z})\eta^{2}({45z})}{\eta({5z})\eta({9z})} \Big), 
\eeqars
which completes the proof of the last equality.
\end{proof}

The following congruence relations follow immediately from Theorem 4.2.

\begin{corollary} We have
\beqars
&& | E_{30A}(\zz_p) | \equiv 0 \pmod 6 \mbox{~for all $p\nmid 30$,}\\
&& | E_{33A}(\zz_p) | \equiv 0 \pmod 2 \mbox{~for all $p\nmid 33$,}\\ 
&& | E_{38A}(\zz_p) | \equiv 0 \pmod 3 \mbox{~for all $p\nmid 38$,}\\ 
&& | E_{40A}(\zz_p) | \equiv 0 \pmod 4 \mbox{~for all $p\nmid 40$,}\\ 
&& | E_{42A}(\zz_p) | \equiv 0 \pmod 8 \mbox{~for all $p\nmid 42$,}\\
&& | E_{44A}(\zz_p) | \equiv 0 \pmod 3 \mbox{~for all $p\nmid 44$,}\\
&& | E_{45A}(\zz_p) | \equiv 0 \pmod 2 \mbox{~for all $p\nmid 45$.}
\eeqars
\end{corollary}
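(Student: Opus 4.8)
The plan is to read each congruence straight off the generating functions of the preceding theorem, using only two inputs: (i) every eta quotient occurring in Table~3.1 has integer Fourier coefficients, and (ii) each group order $|E_N(\zz_p)|$ is a priori a (nonnegative) integer. Input (ii) is immediate from the Modularity Theorem, since $|E_N(\zz_p)| = p+1-[p]F_N(z)$ and the newform $F_N(z)$ has integer Fourier coefficients (read off from Cremona's tables, as in the proof of Theorem~3.1). So the real content is input (i) together with a bookkeeping of the rational prefactors.

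First I would record the integrality of the eta quotients. For an eta quotient $g(z)=\prod_{\delta\mid N}\eta^{r_\delta}(\delta z)$ one has $g(z)=q^{s}\prod_{\delta\mid N}\prod_{n\ge1}(1-q^{\delta n})^{r_\delta}$ with $s=\frac{1}{24}\sum_{\delta\mid N}\delta r_\delta$. For each eta quotient appearing in Table~3.1 a one-line check gives $s\in\zz$ with $s\ge 0$, and since $\prod_{n\ge1}(1-q^{\delta n})^{r_\delta}\in\zz[[q]]$ for every $r_\delta\in\zz$, each such $g(z)$ lies in $\zz[[q]]$. Hence $[p]g(z)\in\zz$, and more generally $[p]$ of any integer combination of these eta quotients is an integer, for every prime $p$.

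With this in hand the cases $30A$, $33A$, $40A$ and $42A$ fall out at once: in each, the generating function has the shape $|E_N(\zz_p)| = m\,[p](h_N(z))$, where $m\in\{6,2,4,8\}$ is the stated modulus and $h_N(z)$ is an integer combination of eta quotients, so $[p](h_N(z))\in\zz$ gives $|E_N(\zz_p)|\equiv0\pmod m$. The cases $44A$ and $45A$ are only slightly different: there the identity reads $|E_N(\zz_p)| = m(p+1)-m\,[p](h_N(z)) = m\big((p+1)-[p](h_N(z))\big)$ with $m\in\{3,2\}$, and since the bracket is an integer the same conclusion follows.

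The one case needing an extra step, and the only genuine obstacle, will be $38A$, because its generating function carries the fractional prefactor $\tfrac37$ together with the coefficient $\tfrac34$ inside the eta combination, so no common integer factor is visible directly. Writing $X,Y,Z,W$ for the four level-$38$ eta quotients and clearing denominators, the generating function rearranges to $28\,|E_{38A}(\zz_p)| = 3K$, where $K=8(p+1)-[p]\big(4X-24Y-4Z-3W\big)\in\zz$ by the integrality step. Thus $28\,|E_{38A}(\zz_p)|\equiv0\pmod3$, and since $28\equiv1\pmod3$ this forces $|E_{38A}(\zz_p)|\equiv0\pmod3$; it is precisely the a priori integrality of $|E_{38A}(\zz_p)|$ from input (ii) that legitimizes this reduction. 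As a sanity check on the list, the newforms $35A$ and $38B$ are absent from the statement exactly because in their generating functions the coefficient of $[p]$ is $1$, leaving no common factor to extract.
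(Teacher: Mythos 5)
Your proposal is correct and follows the same route as the paper, which derives the corollary directly from the generating functions of Theorem 4.3 (the paper offers no further detail, stating only that the congruences ``follow immediately''). Your write-up simply makes explicit the bookkeeping the paper leaves implicit --- the integrality of the eta-quotient coefficients (each exponent sum $\frac{1}{24}\sum_{\delta}\delta r_{\delta}$ is indeed a nonnegative integer for every quotient in Table 3.1) and, for $E_{38A}$, the denominator-clearing step $28\,|E_{38A}(\zz_p)|=3K$ with $K\in\zz$ and $\gcd(28,3)=1$, which is exactly the reduction needed to justify the paper's claim in the one case where the rational prefactors do not visibly expose the common factor.
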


%% Section 4
\section{Remarks}

\noindent
{\bf (1)} The linear combinations of eta quotients and Eisenstein series in Table 3.1 have been chosen in a way that 
we can prove Theorem 4.3. In Table 5.1 below we give alternative representations for the newforms of levels $33, 40, 42$ for which 
have fewer number of eta quotients and Eisenstein series. 
We note that representation of the newform for level $33$ in \cite{pathak} is the same as the one in Table 5.1. 

%\newpage

\renewcommand{\arraystretch}{2.4}
\begin{longtable}{l l l l} 
\caption{Alternative representations for weight $2$ newforms of levels $33$, $40$, $42$} \\
\hline \hline
Level & Name &  The eta quotients and Eisenstein series \\ 
\hline \hline 
\endfirsthead
%{\textit{Continued from previous page}} \\
\hline
Level & Name & The eta quotients and Eisenstein series \\ 
\hline 
\endhead
\hline % {\textit{Continued on next page}} \\
\endfoot
\hline
\endlastfoot
$33$ &  $F'_{33}(q)=$ &  $\displaystyle 3\eta^2(q^3)\eta^2(q^{33})+3\eta(q)\eta(q^3)\eta(q^{11})\eta(q^{33})+\eta^2(q)\eta^2(q^{11})$\\
\hline
$40$ & $F'_{40}(q)=$ & $\displaystyle 2\eta^2(q^2)\eta^2(q^{10})-\frac{\eta(q^2)\eta^2(q^{8})\eta^5(q^{20})}{\eta(q^4)\eta(q^{10})\eta^2(q^{40})}$\\[1mm]
\hline
$42$ & $F'_{42}(q)=$ & $\ds \frac{\eta^{2}(q^{2}\eta^{2}(q^{3}))\eta^{2}(q^{14})\eta^{2}(q^{21})}{\eta(q)\eta(q^{6})\eta(q^{7})\eta(q^{42})}- \frac{\eta^{2}(q)\eta^{2}(q^{6})\eta^{2}(q^{7})\eta^{2}(q^{42})}{\eta(q^{2})\eta(q^{3})\eta(q^{14})\eta(q^{21})}$
\end{longtable}

{\bf (2)} In \cite{ono} Martin and Ono represented weight $2$ newforms of levels $11, 14, 15, 20$, $24, 27, 32, 36, 48, 64, 80, 144$ 
in terms of single eta quotients. 
Let $F_{N}(z)$ be the newform(s) at level $N$. Using the arguments from this paper we 
give alternative representations for the new forms of levels $11, 14, 15$  in Table 5.2.

\renewcommand{\arraystretch}{2.4}
\begin{longtable}{l l l l} 
\caption{Alternative representations for weight $2$ newforms of levels $11$, $14$, $15$} \\
\hline \hline
Level & Name &  The eta quotients and Eisenstein series \\
\hline \hline
\endfirsthead
%{\textit{Continued from previous page}} \\
\hline
\hline 
Level & Name &  The eta quotients and Eisenstein series \\ 
\hline \hline
\endhead
\hline % {\textit{Continued on next page}} \\
\endfoot
\hline
\endlastfoot
$11$ &  $F_{11}(z)=$ &  $\displaystyle  -5 \frac{\eta^4(2z)\eta^4(22z)}{\eta^2(z)\eta^2(11z)}-4 \eta^2(2z)\eta^2(22z) +\frac{1}{2} L_2(z)
\ds +L_{11}(z)-\frac{1}{2} L_{22}(z) $\\[1mm]
\hline
$14$ & $F_{14}(z)=$ & $\displaystyle \frac{6}{5}\frac{\eta^5(z)\eta^5(14z)}{\eta^3(2z)\eta^3(7z)}  +\frac{13}{5} L_2(z) -\frac{13}{5} L_7(z) + L_{14}(z)$\\[1mm]
\hline
$15$ & $F_{15}(z)=$ & $\displaystyle -4\frac{\eta^3(3z)\eta^3(15z)}{\eta(z)\eta(5z)}  +\frac{1}{3} L_3(z) + L_5(z) -\frac{1}{3} L_{15}(z) $\\[1mm] 
\end{longtable}

\noindent Corresponding strong Weil curves with conductors $11$, $14$ and $15$  are 
\beqars
&& E_{11A}: y^2+y=x^3- x^2-10 x -20 ,\\
&& E_{14A}: y^2+xy+y=x^3+4 x -6,\\
&& E_{15A}: y^2+xy+y=x^3+ x^2-10 x -10,
\eeqars
respectively, see \cite[Table 1]{Cremona} and \cite{ono}.  Similar to Theorem 4.3, we obtain 
\beqars
&& \mid E_{11A}(\zz_p) \mid =5[p] \left( \frac{\eta^4(2z)\eta^4(22z)}{\eta^2(z)\eta^2(11z)}\right) \mbox{~for all $p\nmid 11$,}\\
&& \mid E_{14A}(\zz_p) \mid  =-\frac{6}{5}[p] \left( \frac{\eta^5(z)\eta^5(14z)}{\eta^3(2z)\eta^3(7z)} \right) \mbox{~for all $p\nmid 14$,}\\
&& \mid E_{15A}(\zz_p) \mid =4 [p] \left(\frac{\eta^3(3z)\eta^3(15z)}{\eta(z)\eta(5z)} \right) \mbox{~for all $p\nmid 15$.}
\eeqars
Thus we deduce the congruence relations
\beqars
&& \mid E_{11A}(\zz_p) \mid \equiv 0 \pmod 5 \mbox{, for all $p\nmid 11$,}\\
&& \mid E_{14A}(\zz_p) \mid  \equiv 0 \pmod 6 \mbox{, for all $p\nmid 14$,}\\
&& \mid E_{15A}(\zz_p) \mid \equiv 0 \pmod 4 \mbox{, for all $p\nmid 15$.}
\eeqars

\newpage

{\bf (3)} 
Numerical results indicate that generating functions for $|E_N(\zz_p)|$ for a family of elliptic curves $E_N$ of conductor $N$ 
can be given by eta quotients and Eisenstein series. 
To this end, we have obtained generating functions for $|E_N(\zz_p)|$  for  elliptic curves $E_N$ of  conductor $N$ 
for various $N <100$, which will be a part of the PhD thesis of Zafer Selcuk Aygin.

\section*{Acknowledgments} 
The research of the first two authors was supported 
by Discovery Grants from the Natural Sciences and Engineering Research Council of Canada (RGPIN-418029-2013 and RGPIN-2015-05208). Zafer Selcuk Aygin's studies are supported by Turkish Ministry of Education.

\vspace{3mm}
\noindent
Centre for Research in Algebra and Number Theory \\
School of Mathematics and Statistics \\
Carleton University, Ottawa \\
Ontario, K1S 5B6, Canada \\

\noindent
AyseAlaca@cunet.carleton.ca\\
SabanAlaca@cunet.carleton.ca\\
ZaferAygin@cmail.carleton.ca

\end{document}